\DeclareSymbolFont{AMSb}{U}{msb}{m}{n}
\setlist[enumerate]{noitemsep}
      \numberwithin{equation}{section}
\DeclareSymbolFont{usualmathcal}{OMS}{cmsy}{m}{n}
\DeclareSymbolFontAlphabet{\mathcal}{usualmathcal}
\definecolor{cornellred}{rgb}{0.7, 0.11, 0.11}
\definecolor{britishracinggreen}{rgb}{0.0, 0.26, 0.15}
\definecolor{cobalt}{rgb}{0.0, 0.28, 0.67}
\newcommand{\BA}{{\mathbb{A}}}
\newcommand{\BC}{{\mathbb{C}}}
\newcommand{\BG}{{\mathbb{G}}}
\newcommand{\BN}{{\mathbb{N}}}
\newcommand{\BP}{{\mathbb{P}}}
\newcommand{\simto}{\,\widetilde{\to}\,}
\newcommand{\into}{\hookrightarrow}
\newcommand{\onto}{\twoheadrightarrow}
\DeclareMathOperator{\Hilb}{Hilb}
\DeclareMathOperator{\Quot}{Quot}
\DeclareMathOperator{\Var}{Var}
\DeclareMathOperator{\GL}{GL}
\DeclareMathOperator{\rk}{rg}
\DeclareMathOperator{\NCQuot}{ncQuot}
\DeclareFontFamily{OT1}{rsfs}{}
\DeclareFontShape{OT1}{rsfs}{n}{it}{<-> rsfs10}{}
\DeclareMathAlphabet{\curly}{OT1}{rsfs}{n}{it}
\newcommand\Hom{\operatorname{Hom}}
\newcommand\End{\operatorname{End}}
\newcommand{\lHom}{\mathscr{H}\kern-0.3em {o}\kern-0.2em{m}}
\newcommand{\RRlHom}{\mathbf{R}\kern-0.025em\mathscr{H}\kern-0.3em {o}\kern-0.2em{m}}
\newcommand{\lExt}{{\mathscr{E}\kern-0.2em xt}}
\newcommand{\RHom}{{\mathbf{R}\kern-0.07em\mathrm{Hom}}}
\newcommand\Supp{\operatorname{Supp}}
\newcommand{\OO}{\mathscr O}
\newenvironment{proofof}[1]{\par
  \pushQED{\qed}%
  \normalfont \topsep6\p@\@plus6\p@\relax
  \trivlist
  \item[\hskip3\labelsep
        \itshape
    Démonstration du #1\@addpunct{.}]\ignorespaces
}{%
  \popQED\endtrivlist\@endpefalse
}
\tikzset{commutative diagrams/arrow style=math font}
\tikzset{commutative diagrams/.cd,
mysymbol/.style={start anchor=center,end anchor=center,draw=none}}
\newcommand\MySymb[2][\square]{%
  \arrow[mysymbol]{#2}[description]{#1}}
\tikzset{
shift up/.style={
to path={([yshift=#1]\tikztostart.east) -- ([yshift=#1]\tikztotarget.west) \tikztonodes}
}
}
\theoremstyle{definition}
\newtheorem*{lemma*}{Lemma}
\newtheorem*{theorem*}{Theorem}
\newtheorem*{example*}{Example}
\newtheorem*{fact*}{Fact}
\newtheorem*{notation*}{Notation}
\newtheorem*{definition*}{Definition}
\newtheorem*{prop*}{Proposition}
\newtheorem*{remark*}{Remarque}
\newtheorem*{corollary*}{Corollary}
\newtheorem*{conventions*}{Conventions}
\newtheorem{definition}{Definition}[section]
\newtheorem{remark}[definition]{Remarque}
\newtheoremstyle{thm} 
        {3mm}
        {3mm}
        {\slshape}
        {0mm}
        {\bfseries}
        {.}
        {1mm}
        {}
\theoremstyle{thm}
\newtheorem{lemma}[definition]{Lemme}
\newtheorem{thm}{Théorème}
\newcommand{\boldit}[1]{\boldsymbol{#1}} 
\newcommand{\nn}{\boldit{n}}
\newcommand{\expdim}{\mathrm{expdim}}
\title[Sur la lissité du schéma Quot ponctuel emboîté]{Sur la lissité du schéma Quot ponctuel emboîté}
\author{Sergej Monavari et Andrea T. Ricolfi}
\begin{document}

\maketitle

\begin{abstract}
Dans cet article on caractérise la lissité du \emph{schéma Quot ponctuel emboîté} d'une variété lisse  --- c'est-à-dire l'espace de modules paramétrant les drapeaux de quotients de dimension $0$ d'un faisceau localement libre fixé. Nos résultats étendent la classification de Cheah concernant les schémas de Hilbert ponctuels emboîtés.
\end{abstract}

\section{Introduction}
Soit $X$ une variété lisse et quasi-projective de dimension $m$, définie sur le corps $\BC$. Soit $E$ un faisceau localement libre de rang $r$ au dessus de $X$. Pour un entier fixé $d>0$ et un $d$-uplet $\nn = (0\leq n_1 \leq \cdots \leq n_d)$ d'entiers non-décroissants, on considère le \emph{schéma Quot ponctuel emboîté}
\[
\Quot_X(E,\nn) = \Set{\bigl[E \onto T_d \onto \cdots \onto T_1\bigr]|\dim(T_i) = 0,\,\chi(T_i) = n_i}
\]
où la dimension d'un faisceau cohérent $T$ est, par définition, la dimension de son support.

Dans cet article on donne des conditions nécessaires et suffisantes pour que le schéma $\Quot_X(E,\nn)$ soit lisse. Quand $d=1$ on retrouve le schéma Quot de Grothendieck et par abus on remplace l'écriture $\nn = (0\leq n)$ par l'entier $n \in \BN$ correspondant. Sans que cela impacte la généralité de notre propos, on suppose au cours du théorème suivant que $\nn$ est de la forme $\nn = (0<n_1<\cdots<n_d)$.

\begin{thm}\label{main_thm_INTRO}
Soit $(X,E,\nn)$ comme ci-dessus. Alors $\Quot_X(E,\nn)$ est lisse dans les cas suivants:
\begin{enumerate}
    \item Si $m=1$, pour tout choix de $(E,d,\nn)$, \label{previous_paper}
     \item si $d=1$  et $n=1$, \label{projective_bundle}
     \item si $r=1$, dans les cas suivants:
     \begin{enumerate}
         \item  $m=2,d=1$, pour tout choix de $n$, \label{Fogarty}
    \item  $m=d=2$ et $\nn=(n,n+1)$, \label{surfaces_rank_1}
    \item  $m\geq 3,d=1$ et $n\leq 3$, \label{higherdim_1}
    \item  $m\geq 3,d=2$ et $\nn=(1,2),(2,3)$, \label{higherdim_2}
     \end{enumerate}
\end{enumerate}
Dans tous les autres cas, $\Quot_X(E,\nn)$ est singulier.
\end{thm}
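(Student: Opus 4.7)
La démarche consiste à traiter séparément les cas de lissité énumérés et la singularité dans tous les autres cas. Dans les deux directions, grâce à la trivialité locale de $E$ et à la stratification selon le support des quotients, on se ramène à l'étude locale de $\Quot_{\BA^m}(\OO^r, \nn)$ aux points concentrés en l'origine.

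Pour la \emph{lissité}, on procède comme suit. Le cas (\ref{previous_paper}) est établi dans nos travaux précédents ; le cas (\ref{projective_bundle}) est immédiat car $\Quot_X(E,1) \cong \BP(E)$, qui est un fibré projectif lisse ; le cas (\ref{Fogarty}) est le théorème classique de Fogarty. Pour (\ref{higherdim_1}), on utilise qu'un idéal de colongueur $n \leq 3$ dans $\OO_{X,x}$ est, à changement de variables près, engendré par des monômes, ce qui fournit une dimension tangente constante égale à $mn$. Le cas (\ref{surfaces_rank_1}) s'appuie sur la classification de Cheah des schémas de Hilbert emboîtés sur une surface lisse. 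Pour (\ref{higherdim_2}), on mène un calcul direct de l'espace tangent au drapeau $[\OO_X \onto T_2 \onto T_1]$ : l'analyse au point le plus dégénéré (quotients supportés en un seul point de $X$) montre que l'on reste dans la dimension attendue, grâce à la souplesse qu'apportent les petites valeurs $n_1, n_2 \leq 3$.

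Pour la \emph{singularité}, la stratégie clé est un principe d'élargissement : si $\Quot_{\BA^m}(\OO^r, \nn')$ admet un point singulier et si $\nn' \leq \nn$ composante par composante, alors en plaçant des quotients simples en des points distincts génériques éloignés de l'origine, on construit un point singulier de $\Quot_{\BA^m}(\OO^r, \nn)$. Il suffit donc d'exhiber un point singulier dans chaque configuration \emph{minimale} absente de la liste. Pour $r \geq 2$, le cas minimal hors de (\ref{previous_paper})--(\ref{projective_bundle}) est $(d,n)=(1,2)$ avec $m \geq 2$ : un quotient $[\OO^r \onto T]$ avec $T$ de longueur $2$ concentré en l'origine fournit, via le calcul de $\Hom(K,T)$ où $K$ désigne le noyau, un espace tangent strictement plus grand que la dimension attendue $2mr$. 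Pour $r = 1$, on invoque directement la classification de Cheah pour $m = 2$, $d \geq 2$, $\nn \neq (n, n+1)$, ainsi que pour $m \geq 3$, $d = 1$, $n \geq 4$ ; les drapeaux restants en dimension supérieure s'obtiennent par adaptation de ses arguments d'idéaux monomiaux obstrués.

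Le point technique central sera le cas (\ref{higherdim_2}) et son complémentaire. Il faut d'une part démontrer la lissité pour $\nn = (1,2)$ et $\nn = (2,3)$ en toute dimension $m \geq 3$, et d'autre part construire des obstructions explicites pour tous les autres drapeaux de longueur $2$, notamment $\nn = (1,3)$ et $\nn = (3,4)$. La subtilité réside dans le fait que $\Hilb^3(X)$ est lisse alors que $\Hilb^4(X)$ ne l'est pas en dimension $\geq 3$ : la preuve doit donc identifier précisément le seuil où l'emboîtement détruit (ou préserve) la lissité, ce qui requiert un calcul fin et simultané des espaces tangent et obstruction du complexe de drapeaux.
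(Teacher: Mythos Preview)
Your overall strategy matches the paper's (reduce to $(\BA^m,\OO^{\oplus r})$, cite known smoothness results, and exhibit singular points that propagate via an \'etale direct-sum map), but there is a genuine gap in your treatment of the singular cases when $r\geq 2$ and $d\geq 2$. Your ``enlargement principle'' requires $\nn'$ and $\nn$ to have the same length $d$; more precisely, the direct-sum morphism is \'etale only when $\nn-\nn'$ is itself a non-decreasing tuple of length $d$. So from the base case $(d,n)=(1,2)$ you cannot reach, say, $d=2$, $\nn=(1,2)$ (you would need to add the decreasing tuple $(1,0)$), nor any $d\geq 3$ case. The paper fills this hole with two ingredients you do not mention: first, the observation that $\Hilb^{\nn}(\BA^m)$ is a connected component of the $\BG_m^r$-fixed locus of $\Quot_{\BA^m}(\OO^{\oplus r},\nn)$, so every $\nn$ for which Cheah's Hilbert scheme is singular immediately gives a singular Quot scheme for all $r>1$ (this handles $d\geq 3$, and $d=2$ with $\nn\neq(n,n+1)$, etc.); second, a separate explicit tangent computation at the point $[\OO^{\oplus r}\onto \OO_0^{\oplus 2}\onto \OO_0]$ for the residual base case $m\geq 2$, $r\geq 2$, $d=2$, $\nn=(1,2)$.

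Two smaller points. Your stated expected dimension ``$2mr$'' is incorrect: the paper's expected dimension is $n_d(m+r-1)$, so for $n=2$ it is $2(m+r-1)$; with the wrong target number the tangent inequality can fail (e.g.\ at $m=r=2$ one gets $4m+2(r-2)=8=2mr$, so no strict inequality). Finally, your ``point technique central'' --- proving smoothness for $\nn=(1,2),(2,3)$ and singularity for $\nn=(1,3),(3,4)$ in dimension $m\geq 3$ --- is unnecessary: these are all $r=1$ statements and are already contained in Cheah's classification, which the paper simply cites. The new content of the theorem, and hence the new work, is entirely on the $r\geq 2$ side.
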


On va démontrer le \Cref{main_thm_INTRO} de la façon suivante: on se ramène d'abord au cas $(X,E) = (\BA^m,\OO^{\oplus r})$, on généralise ensuite la classification de Cheah \cite{MR1616606} pour $r=1$ (listant tous les schémas de Hilbert ponctuels emboîtés lisses) au rang $r$ arbitraire; enfin on exclut toutes les exceptions à priori possibles, en produisant explicitement des points singuliers.

\smallbreak
On remarque ici que dans le cas $d=r=1$, correspondant au \emph{schéma de Hilbert de $n$ points} $\Hilb^n(X)$, il est connu que la lissité s'obtient si et seulement si $m\leq 2$ ou bien $n\leq 3$. Si $r>1$, le schéma Quot de Grothendieck $\Quot_X(\OO^{\oplus r},n)$ est lisse si $X$ est une courbe lisse, par contre il est singulier (mais irréductible, de dimension $n(r+1)$, voir \cite{Ellingsrud_Lehn} et \cite[Example 3.3]{cazzaniga2020framed}) si $X$ est une surface.

La cohomologie de $\Quot_X(E,\nn)$ a été étudiée en détail par Mochizuki  \cite{Mochizuki_FILT} lorsque $X$ est une courbe lisse; dans ce cas-là, le motif $[\Quot_X(E,\nn)] \in K_0(\Var_{\BC})$ de ce schéma a été calculé explicitement dans notre article \cite{bestpaperever}.

\section{Propriétés de l'espace de modules}

On fixe, avec les notations précédentes, un triplet $(X,E,\nn)$ formé d'un faisceau localement libre $E$ au dessus d'une variété lisse $X$, et un $d$-uplet d'entiers $\nn = (0\leq n_1\leq \cdots\leq n_d)$ pour un entier $d>0$. On rappelle que l'on utilise la notation $m=\dim X$ et $r=\rk E$. On remarque aussi que, si $n_d=1$, le schéma $\Quot_X(E,\nn)$ est isomorphe à $\BP(E)$, et notamment est lisse de dimension $m+r-1$. Ce fait sera exploité dans la \Cref{sec_expdim}.

\subsection{Espace tangent}\label{sec_tangentspace}
Comme démontré en \cite[Prop.~2.1]{bestpaperever}, on peut décrire l'espace tangent du schéma $\Quot_X(E,\nn)$ en un point $z = [E \onto T_d \onto\cdots \onto T_1]$ comme le noyeau d'une application $\BC$-linéaire appropriée,
\[
T_z\Quot_X(E, \nn)=\ker\left(\bigoplus_{i=1}^d\Hom(K_i, T_i)\xrightarrow{\Delta_z} \bigoplus_{i=1}^{d-1}\Hom(K_{i+1}, T_{i})\right),
\]
où l'on pose $K_i = \ker (E \onto T_i)$. On omet la définition précise de $\Delta_z$. On n'en fera pas usage dans nos preuves (le lecteur pourra en trouver une définition dans \cite[Section 2]{bestpaperever} ou encore, dans une forme équivalente, dans \cite{Mochizuki_FILT}).

\subsection{Le morphisme somme directe}
Supposons que l'on ait une décomposition $\nn = \nn_1+\cdots+\nn_s$, où tous les $\nn_k = (n_{k1}\leq \cdots\leq n_{kd})$ sont des suites non-décroissantes d'entiers non-négatifs <<plus petites>> que $\nn$. La notation <<somme>> ci-dessus signifie bien-sûr que $n_i = \sum_{1\leq k\leq s} n_{ki}$ pour tout $i=1,\ldots,d$. Considérons l'ouvert
\[
U \into \prod_{1\leq k\leq s} \Quot_X(E,\nn_k)
\]
paramétrant les $s$-uplets de quotients emboîtés
\[
z_k = \bigl[ E \onto T_{kd} \onto \cdots \onto T_{k1} \bigr] \in \Quot_X(E,\nn_k),\qquad k=1,\ldots,s,
\]
tels que le support de $T_{kd}$ soit disjoint du support de $T_{ld}$ pour tout $1\leq k\neq l\leq s$. Alors on a un morphisme de schémas
\[
\begin{tikzcd}
U \arrow{r}{\oplus} & \Quot_X(E,\nn)
\end{tikzcd}
\]
qui associe à un $s$-uplet $(z_1,\ldots,z_s)$ le point
\[
\bigl[E \onto T_{1d}\oplus \cdots \oplus T_{sd} \onto \cdots \onto T_{11}\oplus \cdots \oplus T_{s1} \bigr] \in \Quot_X(E,\nn).
\]
Une application immédiate du critère infinitésimal montre que ce morphisme est étale.

\subsection{Dimension attendue}\label{sec_expdim}
Fixons $\nn=(n_1\leq \dots \leq n_d)$ et une décomposition
$\nn = \sum_{k=1}^{n_d} \nn_k,$
où tout $\nn_k = (n_{k1}\leq \cdots \leq n_{kd})$ satisfait à la condition $n_{kd} = 1$.  Dans le produit
\[
\prod_{k=1}^{n_d} \Quot_X(E,\nn_k) \cong \BP(E)^{n_d}
\]
on considère le sous-schéma ouvert $U_{\nn}$ paramétrant les $n_d$-uplets de quotients dont les supports sont deux à deux disjoints. L'ouvert $U_{\nn}$ est lisse de dimension $n_d(m+r-1)$. Comme $U_{\nn}$ est \'etale au dessus de $\Quot_X(E,\nn)$, à travers le morphisme somme directe, on peut définir la \emph{dimension attendue}
\[
\expdim \Quot_X(E,\nn) = n_d(m+r-1).
\]
En effet, $\Quot_X(E,\nn)$ contient un ouvert lisse (l'image de $U_{\nn}$) de cette dimension. Dans le cas du schéma de Hilbert de $n$ points $\Hilb^n(X)$, l'image de $U_n$ paramètre les $n$-uplets de points distincts (à permutation près). Sa dimension est bien $n\cdot \dim(X)$. Ce nombre est la dimension de $\Hilb^n(X)$ lorsqu'il est irréductible, car la clôture de Zariski de cet ouvert-là, que l'on appelle la \emph{smoothable component}, est toujours une composante irréductible.

\subsection{Connexion}\label{sec_connectedness}
Si $X$ est irréductible, le schéma $\Quot_X(E,\nn)$ est connexe \cite[Thm.~1.4]{bestpaperever}. Alors, si l'on trouve un point $z \in \Quot_X(E,\nn)$ tel que 
\[
\dim_{\BC} T_z \Quot_X(E,\nn) > \expdim \Quot_X(E,\nn) = n_d (m+r-1),
\]
il en résulte que $z$ est forcément un point singulier de $\Quot_X(E,\nn)$.  

\section{Démonstration du théorème}
Nous allons réduire notre analyse sur l'existence des singularités concernant un couple <<global>> $(X,E)$ à une analyse concernant un couple <<local>> $(\BA^m,\OO^{\oplus r})$.

\begin{lemma}\label{lem_ etale chart}
Soit $X$ une variété lisse et quasi-projective de dimension $m$ sur $\BC$, et soit $E$ un faisceau localement libre de rang $r$ au dessus de $X$. Alors $\Quot_X(E,\nn)$ est lisse si et seulement si $\Quot_{\BA^m}(\OO^{\oplus r},\nn)$ est lisse.
\end{lemma}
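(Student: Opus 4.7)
Le plan consiste à (i) se ramener aux points supportés en un unique point via le morphisme somme directe étale, puis (ii) utiliser une carte étale pour identifier le voisinage formel de $(X,E)$ au voisinage formel de $(\BA^m,\OO^{\oplus r})$ en un tel point.

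Pour (i), soit $z = [E \onto T_d \onto \cdots \onto T_1] \in \Quot_X(E,\nn)$. Puisque $T_d$ est de dimension zéro, $\Supp(T_d) = \{x_1,\ldots,x_s\}$ est fini et chaque $T_i$ se décompose canoniquement $T_i = \bigoplus_k T_{ki}$ suivant ce support. Le point $z$ provient alors, via le morphisme somme directe de la \Cref{sec_connectedness} (plus précisément, celui introduit en Section 2.2), d'un $s$-uplet $(z_1,\ldots,z_s)$ avec $\Supp(z_k) = \{x_k\}$. Ce morphisme étant étale, la lissité en $z$ équivaut à la lissité en chaque $z_k$; un raisonnement analogue s'applique à $\Quot_{\BA^m}(\OO^{\oplus r},\nn)$. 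Il suffit donc de comparer la lissité aux points supportés en un seul point.

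Pour (ii), fixons $x \in X$ et posons $x' = 0 \in \BA^m$. Puisque $X$ est lisse de dimension $m$ et $E$ localement libre de rang $r$, on peut trouver un ouvert $V \ni x$ de $X$, un morphisme étale $\phi : V \to \BA^m$ envoyant $x$ sur $x'$, et un isomorphisme $E|_V \cong \OO_V^{\oplus r}$ (quitte à restreindre $V$). Le pullback par $\phi$ induit un isomorphisme de voisinages formels
\[
(\hat{X}_x, \hat{E}_x) \cong (\hat{\BA}^m_{x'}, \hat{\OO}^{\oplus r}_{x'}),
\]
compatible avec les trivialisations du faisceau.

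Pour conclure, on observe que l'anneau local complété du schéma Quot en un point supporté en $x$ ne dépend que du voisinage formel de $(X,E)$ en $x$. En effet, pour tout $\BC$-algèbre locale artinienne $A$ et toute déformation plate $E_A \onto T_{d,A} \onto \cdots \onto T_{1,A}$ du point $z$, les faisceaux $T_{i,A}$ sont annulés par une puissance de l'idéal maximal de $x$, si bien que le quotient se factorise par $\hat{E}_x \otimes_{\BC} A$. Par conséquent, les foncteurs de déformations de $z$ et de $z'$ (le point correspondant sur $\BA^m$) coïncident, d'où un isomorphisme
\[
\hat{\OO}_{\Quot_X(E,\nn),z} \cong \hat{\OO}_{\Quot_{\BA^m}(\OO^{\oplus r},\nn),z'}.
\]
La lissité en un point étant équivalente à la régularité de l'anneau local complété, l'équivalence annoncée en découle. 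L'obstacle principal est cette dernière étape: justifier proprement que les déformations formelles d'un quotient supporté en $x$ ne voient que le voisinage formel de $(X,E)$ en $x$, ce qui, bien que standard, mérite une vérification soigneuse en présence de la structure emboîtée.
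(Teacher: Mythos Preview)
Your argument is correct but proceeds along a genuinely different route from the paper's. The paper does \emph{not} reduce to single-point support nor invoke deformation functors or completed local rings. Instead, it constructs explicit étale morphisms at the level of schemes: for an open $U\subset X$ trivialising $E$ and an étale chart $\varphi\colon U\to\BA^m$, the assignment $[\OO_U^{\oplus r}\onto T]\mapsto [\OO_{\BA^m}^{\oplus r}\onto \varphi_\ast T]$ defines an étale morphism from the open locus $V^\varphi_{r,n}\subset \Quot_U(\OO_U^{\oplus r},n)$ where $\varphi$ is injective on $\Supp(T)$ onto an open of $\Quot_{\BA^m}(\OO^{\oplus r},n)$ (citing \cite[Prop.~A.3]{BR18}). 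The nested case is then obtained by taking the product $\prod_i\Phi_{n_i}$ and pulling back along the closed immersion of the nested Quot into the product of ordinary Quots; varying $(U,\varphi)$ covers both sides.

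What each approach buys: the paper's pushforward construction is entirely scheme-theoretic and gives honest étale covers without any formal or infinitesimal input; it also handles multi-point support in one stroke, so no preliminary reduction is needed. Your route trades this for a cleaner conceptual core (formal neighbourhoods control local rings), at the cost of the extra step~(i) and the ``standard but delicate'' verification you flag in~(ii). Both are valid; the paper's is arguably more self-contained since the étaleness of $\Phi_n$ is quoted from an explicit reference, whereas your formal-local statement for nested Quot functors, though true, is left as an exercise.
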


\begin{proof}
L'énoncé résulte du fait que $\Quot_X(E,\nn)$ est localement une carte étale pour $\Quot_{\BA^m}(\OO^{\oplus r},\nn)$. On détaille ce fait dans la suite.

Considérons d'abord le cas $d=1$. Soit $U\subset X$ une sous-variété ouverte tel que $E|_U = \OO_U^{\oplus r}$. Supposons que l'on ait un morphisme étale $\varphi\colon U \to \BA^m$. Si l'on écrit $V^{\varphi}_{r,n}$ pour le sous-schéma ouvert de $\Quot_U(\OO_U^{\oplus r},n)$ paramétrant les quotients $[\OO_U^{\oplus r} \onto T]$ tels que $\varphi|_{\Supp(T)}$ soit injectif, on peut bien définir un morphisme {\'etale}
\cite[Prop.~A.3]{BR18}
\[
\Phi_{n}\colon V_{r,n}^\varphi \to \Quot_{\BA^m}(\OO_{\BA^m}^{\oplus r},n)
\]
en associant $[\OO^{\oplus r} \onto T] \mapsto [E \to \varphi_\ast \varphi^\ast E = \varphi_\ast \OO^{\oplus r} \onto \varphi_\ast T]$. En variant $(U,\varphi\colon U \to \BA^m)$ pour couvrir $\BA^m$ tout entier, on peut facilement confirmer le résultat dans le cas $d=1$.

Pour le cas général, fixons $\nn = (0<n_1\leq\cdots\leq n_d)$ et $(U,\varphi)$ comme ci-dessus.
Le produit des morphismes \'etales $\Phi_{n_i}$ nous donne un morphisme \'etale $\Phi_{\nn}$ qui apparaît dans un diagramme 
\[
\begin{tikzcd}[row sep=large,column sep=large]
Z_{\nn}^{\varphi}\MySymb{dr} \arrow[hook]{r} \arrow[swap]{d}{\textrm{\'etale}} & \displaystyle\prod_{1\leq i\leq d} V_{r,n_i}^{\varphi} \arrow{d}{\Phi_{\nn}} \\
\Quot_{\BA^m}(\OO_{\BA^m}^{\oplus r},\nn) \arrow[hook]{r} & \displaystyle\prod_{1\leq i\leq d} \Quot_{\BA^m}(\OO_{\BA^m}^{\oplus r},n_i)
\end{tikzcd}
\]
où les flèches horizontales sont des immersions fermées.

On peut facilement verifier que $Z_{\nn}^{\varphi}$ est aussi l'intersection schématique
\[
\begin{tikzcd}[row sep=large,column sep=large]
Z_{\nn}^{\varphi}\MySymb{dr} \arrow[hook]{r} \arrow[hook]{d}
& \displaystyle\prod_{1\leq i\leq d} V_{r,n_i}^{\varphi} \arrow[hook]{d}{\textrm{ouvert}} \\
\Quot_{U}(\OO_U^{\oplus r},\nn) \arrow[hook]{r}{\textrm{fermé}} & \displaystyle\prod_{1\leq i\leq d} \Quot_{U}(\OO_U^{\oplus r},n_i)
\end{tikzcd}
\]
dans un produit de schémas Quot classiques; comme $\Quot_{U}(\OO_U^{\oplus r},\nn)\subset \Quot_{X}(E,\nn)$ est ouvert, on a trouvé un sous-schéma ouvert $Z_{\nn}^{\varphi} \subset \Quot_{X}(E,\nn)$ qui admet un morphisme étale vers $\Quot_{\BA^m}(\OO^{\oplus r}_{\BA^m},\nn)$. En faisant varier $(U,\varphi\colon U \to \BA^m)$ tout comme dans le cas $d=1$ on obtient le résultat.
\end{proof}

On aborde désormais la démonstration de notre résultat principal.

\begin{proofof}{\Cref{main_thm_INTRO}}
Grâce au Lemme \ref{lem_ etale chart} on peut supposer que $(X,E)=(\BA^m, \OO^{\oplus r}_{\BA^{m}})$. La lissité dans le cas $m=1$, voir~\eqref{previous_paper}, est démontrée dans notre article \cite[Prop.~2.1]{bestpaperever} et dans \cite[Prop.~2.1]{Mochizuki_FILT}. La lissité  dans les cas \eqref{Fogarty}--\eqref{higherdim_2} a été démontrée par Cheah \cite[Theorem, p.~43]{MR1616606}. Enfin, \eqref{projective_bundle} découle de l'isomorphisme $\Quot_{\BA^m}(\OO^{\oplus r}, 1)\cong \BA^{m}\times \BP^{r-1}$ (voir aussi Remarque \ref{rem: noncomm}). Il reste à prouver qu'il n'existe pas d'autres schémas Quot ponctuels emboîtés lisses.

On note que si $\Hilb^{\nn}(\BA^m) = \Quot_{\BA^m}(\OO,\nn)$ est singulier, alors il en est de même de $\Quot_{\BA^m}(\OO^{\oplus r},\nn)$ pour tout $r>1$. En effet, le tore $\BG_m^r$ opère canoniquement sur $\Quot_{\BA^m}(\OO^{\oplus r},\nn)$, et $\Hilb^{\nn}(\BA^m)$ est une composante connexe du sous-schéma des points fixes \cite[Prop.~3.1]{bestpaperever}.

Comme Cheah a démontré que $\Hilb^{\nn}(\BA^m)$ est singulier chaque fois qu'il ne tombe pas dans les cas \eqref{previous_paper},\eqref{projective_bundle},\eqref{Fogarty}--\eqref{higherdim_2}, on déduit que, si $r>1$, le schéma $\Quot_{\BA^m}(\OO^{\oplus r},\nn)$ est singulier dans les cas suivants:
\begin{enumerate}
\item si $d\geq 3$, pour tout choix de $\nn$,
\item si $m=2$, $d=2$, $\nn=(n, n')$ et $n'-n\geq 2$,
\item si $m\geq 3$, $d=1$, $n\geq 4$,
\item si $m\geq 3$, $d=2$, $\nn\neq (1,2),(2,3)$.
     \end{enumerate}
Il ne reste plus qu'à démontrer que $\Quot_{\BA^m}(\OO^{\oplus r},\nn)$ est singulier dans les cas suivants:
\begin{enumerate}
    \item [(A)] si $m\geq 2$, $r\geq 2$, $d=1$ et $n\geq 2$,\label{first case}
    \item [(B)] si $m\geq 2$, $r\geq 2$, $d=2$ et $\nn=(n,n+1)$.
    \label{last_case}
\end{enumerate}
Le cas (A) (resp.~(B)) est l'énoncé du Lemme \ref{lem_ classical sing Quot} (resp.~Lemme \ref{lem_ quot singular two-nested}).
\end{proofof}

\begin{remark}\label{rem: noncomm}
Soit $E$ un faisceau cohérent au dessus d'une variété $X$. L'isomorphisme $\Quot_X(E,1) = \BP(E)$ s'obtient en comparant les foncteurs de modules. En revanche, le cas $(X,E) = (\BA^m,\OO^{\oplus r})$, qui entraîne $\Quot_{\BA^m}(\OO^{\oplus r},1) = \BA^m \times \BP^{r-1}$, s'obtient également à travers une présentation explicite du schéma $\Quot_{\BA^m}(\OO^{\oplus r},n)$ en tant que sous-schéma fermé du \emph{schéma Quot non-commutatif}
\[
\NCQuot_m^{n,r} = 
\Set{
(A_1,\ldots,A_m,v_1,\ldots,v_r) \in \End_{\BC}(\BC^n)^m \times (\BC^n)^r |
  \begin{array}{c} (v_1,\ldots,v_r)\textrm{ est } \\
  (A_1,\ldots,A_m)\textrm{-stable}
  \end{array}}\Bigg{/}\GL_n,
\]
où $\GL_n$ opère par conjugaison sur les endomorphismes et par multiplication à gauche sur les vecteurs; enfin, la condition de stabilité se lit de la façon suivante: le sous-espace de $\BC^n$ engendré par les vecteurs obtenus en appliquant tous les monômes possibles en $A_1,\ldots,A_m$ au vecteurs $v_1,\ldots,v_r$ co\"{i}ncide avec $\BC^n$ tout entier. On voit facilement que la variété $\NCQuot_m^{n,r}$ est lisse de dimension $(m-1)n^2+rn$. Au cas où $n=1$, l'immersion (qui dans le cas général est définie par les relations $[A_i,A_j] = 0$) est triviale, et l'action de $\GL_1$ est aussi triviale sauf sur les $r$-uplets de nombres complexes $(v_1,\ldots,v_r) \in \BC^r$, qui ne peuvent pas être tous $0$ grâce à la condition de stabilité. Ceci fournit une démonstration directe de la décomposition $\Quot_{\BA^m}(\OO^{\oplus r},1) = \BA^m \times \BP^{r-1}$.
\end{remark}

Pour compléter la démonstration du \Cref{main_thm_INTRO} il nous reste à traiter les cas (A) et (B).

\begin{lemma}\label{lem_ classical sing Quot}
Soit $m\geq 2, r\geq 2,n\geq 2$. Alors $\Quot_{\BA^m}(\OO^{\oplus r}, n)$ est singulier.
\end{lemma}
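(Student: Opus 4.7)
Le plan est d'exhiber un point $z \in \Quot_{\BA^m}(\OO^{\oplus r}, n)$ dont l'espace tangent a une dimension strictement supérieure à la dimension attendue $\expdim = n(m+r-1)$; par la connexité rappelée en \Cref{sec_connectedness}, un tel $z$ sera nécessairement singulier.

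Je commencerai par me ramener au cas $n = 2$. Étant donnée la décomposition $n = 2 + 1 + \cdots + 1$ (avec $n-2$ termes égaux à $1$), le morphisme somme directe de la \Cref{sec_expdim} fournit une carte étale d'un ouvert de
\[
\Quot_{\BA^m}(\OO^{\oplus r}, 2) \times \Quot_{\BA^m}(\OO^{\oplus r}, 1)^{n-2}
\]
vers $\Quot_{\BA^m}(\OO^{\oplus r}, n)$. Les facteurs $\Quot_{\BA^m}(\OO^{\oplus r}, 1) \cong \BA^m \times \BP^{r-1}$ étant lisses, tout point singulier $z_0$ du facteur $\Quot_{\BA^m}(\OO^{\oplus r}, 2)$ supporté à l'origine, joint à $n-2$ points simples supportés en des points distincts de $\BA^m$ disjoints de l'origine, induira par étaleté un point singulier de $\Quot_{\BA^m}(\OO^{\oplus r}, n)$.

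Pour traiter le cas $n = 2$, je proposerai le quotient $z = [q]$ où $q \colon \OO^{\oplus r} \onto \BC_0^{\oplus 2}$ est l'évaluation à l'origine sur les deux premiers facteurs et zéro sur les $r-2$ autres (l'indice $0$ désignant le faisceau gratte-ciel à l'origine). Son noyau s'écrit alors $K = \fm^{\oplus 2} \oplus \OO^{\oplus(r-2)}$, où $\fm$ désigne l'idéal maximal à l'origine; la description de \Cref{sec_tangentspace} se réduit à $T_z \Quot = \Hom(K, \BC_0^{\oplus 2})$. L'identification classique $\Hom(\fm, \BC_0) \cong \Hom(\fm/\fm^2, \BC_0) \cong \BC^m$ (qui exploite $\fm \cdot \BC_0 = 0$) conduit à
\[
\dim_{\BC} T_z \Quot = 4m + 2(r-2) = 4m + 2r - 4,
\]
à comparer avec $\expdim = 2m + 2r - 2$: l'excès $2(m-1)$ est strictement positif dès que $m \geq 2$, ce qui conclura.

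L'obstacle principal n'est pas d'ordre calculatoire mais réside dans le choix judicieux du point: des candidats trop <<concentrés>>, tels que $T = \OO/\fm^2$ muni d'une surjection adéquate, produisent typiquement des espaces tangents dont la dimension égale exactement $\expdim$, et ne permettent donc pas de détecter la singularité. Le quotient proposé tire parti de la structure de somme directe de $\OO^{\oplus r}$ pour introduire deux copies indépendantes de $\fm$ dans le noyau, ce qui est précisément la source de l'excès $2(m-1)$.
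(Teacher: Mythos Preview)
Your proof is correct and follows essentially the same approach as the paper: the same singular point $z$ with kernel $\mathfrak m^{\oplus 2}\oplus\OO^{\oplus r-2}$ for $n=2$, and the same reduction via the étale direct-sum morphism for $n\geq 3$ (the paper merely presents the two steps in the opposite order). One minor quibble with your closing remark: $\OO/\mathfrak m^2$ has length $m+1\geq 3$ when $m\geq 2$, so it is not actually a candidate quotient for $n=2$; you presumably meant a length-$2$ thickening such as $\OO/(x_1^2,x_2,\ldots,x_m)$.
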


\begin{proof}
Nous commençons par démontrer l'énoncé dans le cas $n=2$.

Considérons un point $z \in \Quot_{\BA^m}(\OO^{\oplus r},2)$ représenté par une suite exacte
\[
0 \to \mathfrak m_0^{\oplus 2} \oplus \OO^{\oplus r - 2} \to \OO^{\oplus r} \to \OO_0^{\oplus 2} \to 0
\]
où $\mathfrak m_0 = (x_1,\ldots,x_m) \subset \OO = \BC[x_1,\ldots,x_m]$ est l'idéal de l'origine $0 \in \BA^m$ et $\OO_0 = \OO/\mathfrak m_0$ est son faisceau structural. On obtient
\begin{align*}
    \dim_{\BC} T_z\Quot_{\BA^m}(\OO^{\oplus r},2) 
    &= \dim_{\BC}\Hom_{\OO}(\mathfrak m_0^{\oplus 2} \oplus \OO^{\oplus r - 2},\OO_0^{\oplus 2}) \\
    &=\dim_{\BC}\Hom_{\OO}(\mathfrak m_0^{\oplus 2},\OO_0^{\oplus 2}) + \dim_{\BC} \Hom_{\OO}(\OO^{\oplus r - 2},\OO_0)^{\oplus 2} \\
    &=4 m + 2 (r-2),
\end{align*}
qui est plus grand que $\expdim \Quot_{\BA^m}(\OO^{\oplus r},2) = 2(m+r-1)$ comme $m\geq 2$. En exploitant la connexion du schéma Quot (voir la \Cref{sec_connectedness}), le calcul ci-dessus montre que $z$ est bien un point singulier.

On suppose désormais que $n\geq 3$.
Considérons le sous-schéma ouvert
\[
U \into \Quot_{\BA^m}(\OO^{\oplus r},2) \times \Quot_{\BA^m}(\OO^{\oplus r},1)^{n-2}
\]
paramétrant les $(n-1)$-uplets de quotients dont les supports sont deux à deux disjoints. Choisissons un point $u \in U$ de la forme $u = (\mathfrak m_0^{\oplus 2} \oplus \OO^{\oplus r - 2},\mathfrak m_{p_1}\oplus \OO^{\oplus r-1},\ldots,\mathfrak m_{p_{n-2}}\oplus \OO^{\oplus r-1})$, où $0\neq p_i \in \BA^m$ pour tout $1\leq i\leq n-2$ et $p_i\neq p_j$ pour $1\leq i\neq j\leq n-2$. Le schéma $U$ est étale au dessus de $\Quot_{\BA^m}(\OO^{\oplus r},n)$ par le morphisme somme directe. On note $v$ l'image du point $u$ par ce morphisme. On trouve
\begin{align*}
    \dim_{\BC} T_v \Quot_{\BA^m}(\OO^{\oplus r},n) 
    &= \dim_{\BC} T_u U \\
    &= 4 m + 2 (r-2) + (n-2) (m+r-1) \\
    &= n(m+r-1) + 2m-2,
\end{align*}
qui est plus grand que $\expdim \Quot_{\BA^m}(\OO^{\oplus r},n) = n(m+r-1)$ comme $m\geq 2$. Encore une fois grâce à la connexion du schéma Quot, ceci prouve le résultat.
\end{proof}

\begin{lemma}\label{lem_ quot singular two-nested}
Soit $m\geq 2, r\geq 2$ et $\nn=(n,n+1)$ pour $n\geq 1$. Alors $\Quot_{\BA^m}(\OO^{\oplus r}, \nn)$ est singulier.
\end{lemma}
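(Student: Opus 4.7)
Le plan est d'adapter la stratégie utilisée pour le \Cref{lem_ classical sing Quot}~: on va exhiber un point dont l'espace tangent dépasse strictement la dimension attendue, puis invoquer la connexion de $\Quot_{\BA^m}(\OO^{\oplus r},\nn)$ (voir \Cref{sec_connectedness}) pour en déduire l'existence d'une singularité. On traitera d'abord le cas de base $n=1$, c'est-à-dire $\nn=(1,2)$, puis on passera au cas général à l'aide du morphisme somme directe.

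Dans le cas $\nn=(1,2)$, je choisirais le point
\[
z_0 = \bigl[\OO^{\oplus r} \onto \OO_0^{\oplus 2} \onto \OO_0\bigr],
\]
où la première flèche est la projection sur les deux premiers facteurs (composée avec la réduction modulo $\fm_0$) et la seconde est la projection sur le premier. On obtient alors $K_1 = \fm_0 \oplus \OO^{\oplus r-1}$ et $K_2 = \fm_0^{\oplus 2} \oplus \OO^{\oplus r-2}$, dont la structure en somme directe rend le calcul des espaces $\Hom(K_i,T_j)$ élémentaire. En utilisant la borne $\dim \ker \Delta_{z_0} \geq \dim_{\BC}\Hom(K_1,T_1) + \dim_{\BC}\Hom(K_2,T_2) - \dim_{\BC}\Hom(K_2,T_1)$ donnée par la description tangentielle de la \Cref{sec_tangentspace}, un calcul direct fournit $\dim_{\BC} T_{z_0} \Quot_{\BA^m}(\OO^{\oplus r},(1,2)) \geq (m+r-1) + (4m+2r-4) - (2m+r-2) = 3m+2r-3$, qui dépasse la dimension attendue $2(m+r-1)$ de $m-1 \geq 1$.

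Pour le cas général $n\geq 2$, je m'appuierais sur la décomposition $\nn = (1,2) + (n-1)\cdot(1,1)$, en notant que $\Quot_{\BA^m}(\OO^{\oplus r},(1,1)) \cong \Quot_{\BA^m}(\OO^{\oplus r},1)$ est lisse de dimension $m+r-1$. Dans l'ouvert $U$ paramétrant les $n$-uplets à supports deux à deux disjoints, on prendrait un point $u = (z_0, u_1, \ldots, u_{n-1})$ avec $z_0$ le point singulier construit ci-dessus (supporté à l'origine) et les $u_i$ supportés en $n-1$ points non nuls distincts. Comme le morphisme somme directe est étale, on obtient
\[
\dim T_{\oplus(u)} \Quot_{\BA^m}(\OO^{\oplus r},\nn) = \dim T_u U \geq (3m+2r-3) + (n-1)(m+r-1) = \expdim \Quot_{\BA^m}(\OO^{\oplus r},\nn) + (m-1),
\]
ce qui conclut grâce à la connexion dès que $m\geq 2$. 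La principale difficulté technique réside dans la vérification des dimensions des espaces $\Hom$ apparaissant dans la borne inférieure, mais l'avantage décisif de cette approche est qu'elle contourne toute analyse fine de l'application $\Delta_{z_0}$~: une simple borne sur la dimension de son noyau suffit.
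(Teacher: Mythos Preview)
Your proof is correct and follows essentially the same approach as the paper: the same test point $[\OO^{\oplus r}\onto\OO_0^{\oplus 2}\onto\OO_0]$ for $\nn=(1,2)$, the same lower bound $\dim\ker\Delta_z\geq(m+r-1)+(4m+2r-4)-(2m+r-2)$, and the same passage to general $n$ via the decomposition $(n,n+1)=(1,2)+(n-1)\cdot(1,1)$ and the étale direct sum morphism. The only cosmetic difference is that you make the excess $m-1$ explicit, whereas the paper leaves it as a strict inequality.
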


\begin{proof}
On commence par montrer l'énoncé dans le cas $n=1$.

Considérons un point $z \in \Quot_{\BA^m}(\OO^{\oplus r},(1,2))$ representé par les quotients emboîtés
\[
 \bigl[ \OO^{\oplus r}\onto \OO_0^{\oplus 2} \onto \OO_0 \bigr],
\]
et écrivons encore une fois $\mathfrak m_0 = (x_1,\ldots,x_m) \subset \OO = \BC[x_1,\ldots,x_m]$ pour l'idéal de l'origine $0 \in \BA^m$. Comme on l'a rappelé à la \Cref{sec_tangentspace}, l'espace tangent en $z$ est donné par:
\begin{multline*}
T_z\Quot_{\BA^m}(\OO^{\oplus r},(1,2))=\\\ker\left(\Hom_\OO(\mathfrak{m}_0\oplus \OO^{\oplus r-1}, \OO_0)\oplus \Hom_\OO(\mathfrak{m}_0^{\oplus 2}\oplus \OO^{\oplus r-2}, \OO_0^{\oplus 2}) \xrightarrow{\Delta_z}  \Hom_\OO(\mathfrak{m}_0^{\oplus 2}\oplus \OO^{\oplus r-2}, \OO_0) \right).
\end{multline*}
D'autre part, les espaces vectoriels apparaissant en $\Delta_z$ satisfont
\begin{align*}
    \dim_{\BC} \Hom_\OO(\mathfrak{m}_0\oplus \OO^{\oplus r-1}, \OO_0) &= m+r-1\\
    \dim_{\BC} \Hom_\OO(\mathfrak{m}_0^{\oplus 2}\oplus \OO^{\oplus r-2}, \OO_0^{\oplus 2}) &=4m+2(r-2) \\
    \dim_{\BC} \Hom_\OO(\mathfrak{m}_0^{\oplus 2}\oplus \OO^{\oplus r-2}, \OO_0)&= 2m+r-2.
\end{align*}
On obtient alors
\begin{align*}
    \dim_{\BC} T_z\Quot_{\BA^m}(\OO^{\oplus r},(1,2))&\geq (m+r-1)+(4m+2(r-2))- (2m+r-2)\\
    &> 2(m+r-1) = \expdim \Quot_{\BA^m}(\OO^{\oplus r},(1,2)).
\end{align*}
Ceci entraîne que $z$ est un point singulier par notre remarque à la \Cref{sec_connectedness}.

On va maintenant supposer que $n\geq 2$.
Considérons le sous-schéma ouvert
\[
U \into \Quot_{\BA^m}(\OO^{\oplus r},(1,2)) \times \Quot_{\BA^m}(\OO^{\oplus r},(1,1))^{n-1}
\]
paramétrant les $n$-uplets de quotients dont les supports sont deux à deux disjoints. Choisissons un point $u = (z,z_1,\ldots,z_{n-1}) \in U$, où $z$ est comme ci-dessus et $z_i$ est representé par des quotients emboîtés
\[
\bigl[\OO^{\oplus r} \onto \OO_{p_i} \simto \OO_{p_i}\bigr],\qquad p_i \in \BA^m.
\]
On va supposer également que $0\neq p_i \in \BA^m$ pour tout $i$ et que $p_i\neq p_j$ pour $1\leq i\neq j\leq n-1$. Le schéma $U$ est étale au dessus de $\Quot_{\BA^m}(\OO^{\oplus r},(n,n+1))$, par le morphisme somme directe. On note $v$ l'image du point $u$ par ce morphisme.
On trouve
\begin{align*}
    \dim_{\BC} T_v \Quot_{\BA^m}(\OO^{\oplus r},(n,n+1)) 
    &= \dim_{\BC} T_uU \\
    &>2(m+r-1) + (n-1)(m+r-1) \\
    &=(n+1)(m+r-1) \\
    &=\expdim \Quot_{\BA^m}(\OO^{\oplus r},(n,n+1)).    
\end{align*}
Le point $v$ est donc un point singulier.
\end{proof}

\subsection*{Remerciements}
S.M.~est financé par NWO grant TOP2.17.004. A.R.~est financé par Dipartimenti di Eccellenza.

\bibliographystyle{amsplain-nodash} 
\bibliography{bib}

\bigskip

\bigskip
\noindent
\emph{Sergej Monavari}, \texttt{s.monavari@uu.nl} \\
\textsc{Mathematical Institute, Utrecht University, P.O.~Box 80010 3508 TA Utrecht, The Netherlands}

\bigskip
\noindent
\emph{Andrea T.~Ricolfi}, \texttt{aricolfi@sissa.it} \\
\textsc{Scuola Internazionale Superiore di Studi Avanzati (SISSA), Via Bonomea 265, 34136 Trieste, Italy}

\end{document}